\documentclass[11pt]{article}
\usepackage{amsmath}
\usepackage{amsfonts}
\usepackage{amssymb}
\usepackage{amsthm}
\usepackage{graphicx}
\usepackage[utf8]{inputenc}
\usepackage[noend]{algorithmic}
\usepackage{booktabs,array}
\usepackage{here}
\usepackage[authoryear,round]{natbib}
\usepackage{epstopdf}
\usepackage[bookmarksnumbered,colorlinks,bookmarks,citecolor=blue,linkcolor=blue,urlcolor=blue,breaklinks,linktocpage]{hyperref}
\usepackage[all]{hypcap}
\usepackage{subfigure}
\usepackage[a4paper,top=3 cm,bottom=3 cm,left=3 cm, right=3 cm]{geometry}

\theoremstyle{plain}
\newtheorem{theorem}{Theorem}[section]
\newtheorem{corollary}[theorem]{Corollary}

\newtheorem{proposition}[theorem]{Proposition}

\theoremstyle{definition}
\newtheorem{definition}[theorem]{Definition}
\newtheorem{algo}[theorem]{Algorithm}
\theoremstyle{remark}
\newtheorem{remark}[theorem]{Remark}
\newtheorem{example}[theorem]{Example}

\newcommand{\Z}{\mathbb{Z}}
\newcommand{\GL}{\operatorname{GL}}
\newcommand{\AffGL}{\overrightarrow{\GL}}

\newcommand{\bx}{\begin{bmatrix}}
\newcommand{\ex}{\end{bmatrix}}

\begin{document}

\title{The art of counterpoint: \\ a Mazzola-type three-voice first-species counterpoint model}

\author{
Ren Okumura\thanks{Corresponding author. Email: 25rmd09@ms.dendai.ac.jp.
This study was supported by Research Institute for Science and Technology of
Tokyo Denki University (Q26K-01), Japan.} ,\quad Takuro Shibayama\\
Department of Informatics, Tokyo Denki University, Tokyo, Japan
}
\date{27 May 2026}

\maketitle

\begin{abstract}
We propose a three-voice extension of Mazzola's algebraic model of
first-species counterpoint. A three-voice sonority is represented as
\(a+\varepsilon_1 .b+\varepsilon_2 .c\), where \(a\) is the lower voice and
\(b,c\) are the lower--middle and lower--upper interval classes. Starting
from a strong dichotomy \((X/Y)\) of \(\mathbb Z_{2k}\), we introduce a
harmonic mask \(H\subseteq X\times X\) and define the admissible
three-voice consonances \(X_H\). We then construct a Hichert-type
successor relation by applying the ordinary two-voice admitted-successor
mechanism to the active pairwise projections. In the twelve-tone Fuxian
case, the proposed mask has 26 admissible bass-rooted interval pairs,
including complete and incomplete sonorities, and yields a finite directed
successor structure. 
\end{abstract}

\noindent\textbf{Keywords:} rings; modules; combinatorics; three-voice counterpoint; harmony

\medskip
\noindent\textbf{AMS subject classification:} 00A65; 05C20; 20K01; 20B25

\section{Introduction}

\, \, \, Counterpoint has long been regarded as one of the musical domains most
amenable to formal and mathematical description. In particular, the
contrapuntal practice associated with J. S. Bach provides a historical
motivation for studying voice leading by mathematical means. However, fully algebraic approaches to counterpoint are relatively recent.

A systematic mathematical theory of counterpoint was established only with the emergence of Mazzola’s model of counterpoint and the Hichert's algorithm\citep{Mazzola2017,AgustinAquinoJunodMazzola2015}. Mazzola's mathematical theory of counterpoint provides an algebraic model of first-species counterpoint in terms of affine symmetries of dual-number extensions of pitch-class rings.

Agustín-Aquino and collaborators have substantially extended this framework \citep{AriasValeroAgustinAquinoLluisPuebla2021,AriasValeroAgustinAquinoLluisPuebla2022}. Their work generalizes the original twelve-tone setting to microtonal and \(2k\)-tone equal-tempered systems, develops the algebraic and combinatorial theory of strong dichotomies, quasipolarities, and counterpoint symmetries, and organizes the resulting structures into the notion of counterpoint worlds.  Further extensions include continuous counterpoint on the octave continuum, categorical and topos-theoretic generalizations of the underlying dichotomy-based formalism, and projection-oriented models for second-species counterpoint \citep{AgustinAquinoMazzola2022}.  These developments have also been used in compositional and analytical contexts.  Thus, Agustín-Aquino's contributions transform Mazzola's original model from a theory of Fuxian first-species counterpoint into a general algebraic framework for constructing and comparing contrapuntal systems.

A related but distinct recent development is Lamotte's three-voice extension of
FuxCP \citep{Lamotte2024}, in which Fux's rules are formalized as a constraint-programming problem for
automatic generation. Our approach is different: we do not encode the entire Fuxian rule system
as a constraint program. Instead, we extend Mazzola's first-species model
itself to three voices over \(\mathbb Z_{2k}\), using a bass-rooted harmonic
mask \(H\subseteq X\times X\) together with a Hichert-type successor
correspondence.

We propose a three-voice extension of Mazzola's first-species counterpoint
model by retaining the ordinary two-voice counterpoint structure on the two
bass-rooted projections and supplementing it with a harmonic mask between
them. A three-voice sonority is represented by a lower voice together with two
intervals, one from the lower voice to the middle voice and one from the lower
voice to the upper voice. The admissible vertical sonorities are determined by
requiring both bass-rooted intervals to be consonant and by imposing an
additional harmonic mask on their ordered pair. This construction is a
fibered three-voice extension of the ordinary two-voice Mazzola system over a
common lower voice, rather than a new strong dichotomy on the full three-voice
space. Thus, in the present three-voice setting, the Fuxian dichotomy is imposed
only on the lower--middle and lower--upper projections; the three-voice
consonance relation itself is not claimed to be a Fuxian dichotomy. The harmonic
relation selects the admissible pairs of bass-rooted consonances, allowing
chordal configurations such as incomplete and first-inversion sonorities that
are not captured by imposing the original dichotomy on every pair of voices. The
successor relation extends Hichert's admitted successor algorithm: the two
bass-rooted projections follow the ordinary two-voice successor relation, and
the target sonority must satisfy the harmonic relation.

In the active-pair version used below, the middle--upper projection is also
governed by the ordinary two-voice mechanism whenever \(c-b\in X\), so inherited pairwise prohibitions, such as the Fuxian avoidance
of proper parallel fifths, require no separate ad hoc rule. In the twelve-tone Fuxian case, the model uses the Renaissance consonance set \(K\) and a bass-rooted harmonic mask. The mask requires both intervals above the lower voice to lie in \(K\), excludes upper-voice separations by a semitone or whole tone, and imposes the additional condition \(c-b\not\equiv b\) whenever \(c-b\not\equiv 0\). This yields a directed successor structure on the admissible three-voice sonorities by maximizing compatible deformations in the spirit of Hichert's algorithm.

\section{Three-voice first-species counterpoint}
\subsection{Mazzola's model extended to three-voice first-species counterpoint}
\, \, \, Let \(\Z_{2k}\) be the cyclic group of pitch classes in a \(2k\)-tone equal temperament, and let \(\varepsilon.\Z_{2k}\) denote the corresponding module of interval classes. A marked interval dichotomy is a partition
\[
  (X/Y),\quad Y=X^c, \quad|X|=|Y|=k,
\]
where \(X\) is interpreted as the set of consonances and \(Y\) as the set of dissonances. The affine group
\[
  \AffGL(\Z_{2k})=\{T_u\cdot v \mid u\in \Z_{2k},\ v\in \Z_{2k}^{\times}\},
\]
\[
  (T_u\cdot v)(z)=u+vz,
\]
acts on interval dichotomies. A dichotomy is called strong if it is both autocomplementary and rigid. Equivalently, for the present purpose, a strong dichotomy has a unique affine symmetry $p=T_u\cdot v$ such that \(p(X)=Y\). This symmetry is called the polarity of the dichotomy.
\begin{definition}
Let
\[
\Z_{2k}[\varepsilon_1,\varepsilon_2]
=
\{a+\varepsilon_1.b+\varepsilon_2.c:a,b,c\in \Z_{2k}\},\quad\varepsilon_i\varepsilon_j=0
\]
for \(i,j\in\{1,2\}\). We write
\[
\begin{bmatrix}c\\b\\a\end{bmatrix}:=a+\varepsilon_1.b+\varepsilon_2.c .
\]
\end{definition}

The entries \(b\) and \(c\) are respectively the lower--middle and
lower--upper intervals; the middle--upper interval is \(c-b\).
\begin{definition}
Let \(H\subseteq X\times X\) be a fixed subset. We write its elements as
\[
  \begin{bmatrix}c\\ b\end{bmatrix},\qquad b,c\in X.
\]
We call \(H\) the three-voice harmonic mask associated with \(X\).

Define
\[
X_H=\left\{\begin{bmatrix}c\\b\\a\end{bmatrix}:a\in \Z_{2k},\ \begin{bmatrix}c\\b\end{bmatrix}\in H\right\}\subseteq \Z_{2k}[\varepsilon_1,\varepsilon_2].
\]
\end{definition}
Thus a three-voice consonance is a pair of \(X\)-consonant intervals
above a common lower voice, subject to the additional mask \(H\).

\subsection{Hichert's algorithm extended to three-voice first-species counterpoint}
\begin{remark}[Review of two-voice admitted successors]
Let $(X/Y)$ be a strong interval dichotomy of \(\Z_{2k}\), with polarity \(p\). We denote by
\[
X[\varepsilon]=\Z_{2k}+\varepsilon .X, \quad Y[\varepsilon]=\Z_{2k}+\varepsilon .Y
\]
the induced counterpoint dichotomy.

Let \(X\subseteq \mathbb Z_{2k}\) be a strong dichotomy with polarity
\(p=T_u\cdot v\). For \(x\in\mathbb Z_{2k}\), the induced polarity on
the fiber \(x+\varepsilon. \mathbb Z_{2k}\) is
\[
  p_x[\varepsilon]=T_{x(1-v)+\varepsilon .u}\cdot v.
\]
A symmetry \(g\in\overrightarrow{GL}(\mathbb Z_{2k}[\varepsilon])\)
is called contrapuntal for $\xi=x+\varepsilon.r\in X[\varepsilon], \quad r\in X$ if \(\xi\notin gX[\varepsilon]\), if \(p_x[\varepsilon]\) polarizes
\((gX[\varepsilon]/gY[\varepsilon])\), and if
\(|gX[\varepsilon]\cap X[\varepsilon]|\) is maximal among all
symmetries satisfying these two conditions.

For a consonant counterpoint interval $\zeta\in X[\varepsilon]$, let \(\Omega_X(\zeta)\) be the set of affine symmetries \(g\) satisfying the
non-maximal contrapuntal conditions.
\[
\zeta\in gY[\varepsilon],
\]
and
\[
p_x[\varepsilon]\bigl(gX[\varepsilon]\bigr)=gY[\varepsilon],
\]
where \(x\) is the {\it cantus-firmus} component of \(\zeta\). Define
\[
m_X(\zeta)=\max_{g\in\Omega_X(\zeta)}\left|X[\varepsilon]\cap gX[\varepsilon] \right|.
\]
The set of ordinary two-voice Hichert symmetries of \(\zeta\) is
\[
\Sigma_X(\zeta)=\left\{g\in\Omega_X(\zeta):\left|X[\varepsilon]\cap gX[\varepsilon]\right|=m_X(\zeta)\right\}.
\]
Thus the ordinary two-voice admitted-successor set is
\[
S_X(\zeta)=\bigcup_{g\in\Sigma_X(\zeta)}\bigl(X[\varepsilon]\cap gX[\varepsilon]\bigr).
\]

This follows the maximization step in Hichert's algorithm: after the affine
candidates are enumerated, only those attaining the maximal intersection
cardinality are retained.
\end{remark}

The three pairwise projections are
\[
\pi_{LM}\begin{bmatrix}c\\ b\\ a\end{bmatrix}=a+\varepsilon .b,
\]
\[
\pi_{LU}\begin{bmatrix}c\\ b\\ a\end{bmatrix}=a+\varepsilon .c,
\]
and
\[
\pi_{MU}\begin{bmatrix}c\\ b\\ a\end{bmatrix}=(a+b)+\varepsilon .(c-b).
\]
\begin{proposition}
For
\[
\xi=\begin{bmatrix}c\\ b\\ a\end{bmatrix}\in X_H,
\]
define the active pair set
\[
A(\xi)
=
\{\alpha\in\{LM,LU,MU\}:\pi_\alpha(\xi)\in X[\varepsilon]\}.
\]
Since \(H\subseteq X \times X\), the pairs \(LM\) and \(LU\) are always active. The pair
\(MU\) is active precisely when
\[
c-b\in X.
\]
\end{proposition}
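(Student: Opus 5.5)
The plan is to unwind the definitions: everything reduces to the description of the two-voice counterpoint dichotomy \(X[\varepsilon]=\Z_{2k}+\varepsilon.X\). Membership of a general element \(x+\varepsilon.r\) in \(X[\varepsilon]\) depends only on its interval component. The cantus-firmus component \(x\) is arbitrary, so \(x+\varepsilon.r\in X[\varepsilon]\) if and only if \(r\in X\). This characterization uses only the definition \(X[\varepsilon]=\{x+\varepsilon.r: x\in\Z_{2k},\ r\in X\}\). I would state it first as the single working observation, noting that the sum is direct because of the unique decomposition of elements of \(\Z_{2k}[\varepsilon]\).

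Next, take \(\xi=\bx c\\ b\\ a\ex\in X_H\). By definition of \(X_H\) we have \(\bx c\\ b\ex\in H\), and since \(H\subseteq X\times X\) this gives \(b\in X\) and \(c\in X\). Applying the observation to \(\pi_{LM}(\xi)=a+\varepsilon.b\) and \(\pi_{LU}(\xi)=a+\varepsilon.c\) shows that both lie in \(X[\varepsilon]\). Hence \(LM,LU\in A(\xi)\) for every \(\xi\in X_H\), independently of \(a\).

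Finally, apply the same observation to \(\pi_{MU}(\xi)=(a+b)+\varepsilon.(c-b)\), whose interval component is \(c-b\). This yields \(MU\in A(\xi)\) if and only if \(c-b\in X\), which is the claimed criterion.

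There is no real obstacle. The only point needing care is that \(\pi_{MU}\) is well defined as an element of \(\Z_{2k}[\varepsilon]\), with \(a+b\) and \(c-b\) computed in \(\Z_{2k}\). Beyond that, one must note that membership in \(X[\varepsilon]\) ignores the first component \(a+b\), so no condition on the middle voice's pitch class enters. Strongness of the dichotomy and the polarity play no role in this proposition.
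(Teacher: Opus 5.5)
Your proposal is correct and follows the same reasoning the paper gives inline (the paper has no separate proof): $H\subseteq X\times X$ forces $b,c\in X$, and membership in $X[\varepsilon]=\Z_{2k}+\varepsilon.X$ is decided by the $\varepsilon$-component, which for $\pi_{MU}(\xi)$ is $c-b$. Your explicit appeal to the uniqueness of the decomposition $x+\varepsilon.r$ in $\Z_{2k}[\varepsilon]$ supplies the one step the paper leaves implicit, namely the ``only if'' direction of the $MU$ criterion.
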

For each \(\alpha\in A(\xi)\), set
\[
\Sigma_\alpha(\xi)
=
\Sigma_X(\pi_\alpha(\xi)).
\]
Define the product of active pairwise Hichert symmetries by
\[
\Sigma(\xi)
=
\prod_{\alpha\in A(\xi)}
\Sigma_\alpha(\xi).
\]

For
\[
\mathbf g=(g_\alpha)_{\alpha\in A(\xi)}
\in
\Sigma(\xi),
\]
define the associated three-voice deformed consonance set by
\[
C_H(\xi;\mathbf g)=X_H \cap \bigcap_{\alpha\in A(\xi)} \pi_\alpha^{-1}\bigl(g_\alpha X[\varepsilon]\bigr).
\]
Its cardinality is denoted by
\[
\mu_H(\xi;\mathbf g)
=
|C_H(\xi;\mathbf g)|.
\]

Now define the three-voice maximum
\[
M_H(\xi)
=
\max_{\mathbf g\in\Sigma(\xi)}
\mu_H(\xi;\mathbf g),
\]
and the set of maximizing three-voice Hichert tuples
\[
\Gamma_H(\xi)
=
\left\{
\mathbf g\in\Sigma(\xi):
\mu_H(\xi;\mathbf g)=M_H(\xi)
\right\}.
\]

The pairwise Hichert-maximized three-voice admitted-successor set is
\[
S_H(\xi)
=
\bigcup_{\mathbf g\in\Gamma_H(\xi)}
C_H(\xi;\mathbf g).
\]

Thus a transition
\[
\xi\longrightarrow\eta
\]
is admitted if and only if
\[
\eta\in S_H(\xi).
\]
The present construction deliberately performs the Hichert maximization
at the level of each active pairwise projection before imposing the
three-voice harmonic mask. Thus \(S_H(\xi)\) is not defined by maximizing
over all affine deformations of the full three-voice module. Rather, it is
a fibered construction over the ordinary two-voice Hichert successor
relation. This choice preserves the pairwise Mazzola--Hichert mechanism
and then filters compatible three-voice targets through \(H\).
\begin{proposition}
Assume that
\[
\Omega_X(\pi_\alpha(\xi))\neq\varnothing
\]
for every \(\xi\in X_H\) and every \(\alpha\in A(\xi)\).
Then the assignment
\[
\xi\longmapsto S_H(\xi)
\]
defines a finite directed graph on \(X_H\). Equivalently,
\[
E_H=\{(\xi,\eta)\in X_H\times X_H:\eta\in S_H(\xi)\}
\]
is a finite set of directed edges on the vertex set \(X_H\).
\end{proposition}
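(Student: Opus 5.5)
The plan is to check that every object in the construction is a well-defined finite set, and then observe that $E_H$ is a subset of the finite set $X_H\times X_H$.

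\emph{Step 1: finiteness of the ambient sets.} $\Z_{2k}[\varepsilon_1,\varepsilon_2]$ has $(2k)^3$ elements, so $X_H$ is finite; in fact $|X_H|=2k\cdot|H|\le 2k\cdot k^2$. The affine group $\AffGL(\Z_{2k}[\varepsilon])$ acting on the two-voice counterpoint space is a group of maps of the finite set $\Z_{2k}[\varepsilon]$, so it is finite. Hence each $\Omega_X(\zeta)$ is a finite subset of it.

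\emph{Step 2: well-definedness of the pairwise and three-voice maxima.} Fix $\xi\in X_H$ and $\alpha\in A(\xi)$. By the preceding Proposition, $\pi_\alpha(\xi)\in X[\varepsilon]$, so $\Sigma_X(\pi_\alpha(\xi))$ is defined. The hypothesis $\Omega_X(\pi_\alpha(\xi))\neq\varnothing$ is exactly what makes $m_X(\pi_\alpha(\xi))$ a maximum of a nonempty finite set of integers. That maximum is attained, so $\Sigma_\alpha(\xi)\neq\varnothing$. The index set $A(\xi)$ contains $LM$ and $LU$ and has at most three elements. Therefore $\Sigma(\xi)$ is a nonempty finite product of nonempty finite sets. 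For each $\mathbf g\in\Sigma(\xi)$, $C_H(\xi;\mathbf g)\subseteq X_H$ is finite and $\mu_H(\xi;\mathbf g)\in\{0,\dots,|X_H|\}$. So $M_H(\xi)$ is a maximum over a nonempty finite set, and $\Gamma_H(\xi)$ is nonempty and finite.

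\emph{Step 3: conclusion.} $S_H(\xi)$ is a finite union of subsets of $X_H$, hence a well-defined subset of $X_H$. The assignment $\xi\mapsto S_H(\xi)$ is therefore a map $X_H\to\mathcal P(X_H)$, and such a map is the same thing as a directed graph with vertex set $X_H$. Its edge set is $E_H=\{(\xi,\eta):\eta\in S_H(\xi)\}\subseteq X_H\times X_H$, which has at most $|X_H|^2$ elements.

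The only point needing care is Step 2. Without the nonemptiness hypothesis, $m_X$ would be a maximum over the empty set, and both $\Sigma_\alpha(\xi)$ and $M_H(\xi)$ would be undefined. The argument must therefore use the hypothesis explicitly for every active pair, including $MU$ when $c-b\in X$. No claim is made that $S_H(\xi)\neq\varnothing$, since a vertex may have out-degree zero. Everything else is bookkeeping.
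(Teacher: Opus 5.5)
Your proposal is correct and takes essentially the same route as the paper's proof. Both arguments use the finiteness of $X_H$ and of the affine group, invoke the nonemptiness hypothesis so that the maxima defining $m_X(\pi_\alpha(\xi))$ and $M_H(\xi)$ exist, and conclude from $E_H\subseteq X_H\times X_H$; your remarks that $\pi_\alpha(\xi)\in X[\varepsilon]$ for active $\alpha$ and that $S_H(\xi)$ may be empty are small clarifications the paper leaves implicit.
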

\begin{proof}
Since \(\mathbb Z_{2k}\) is finite and \(H\subseteq X\times X\), the set $X_H$ is finite.

Fix \(\xi\in X_H\). By assumption, for every \(\alpha\in A(\xi)\) the set
\(\Omega_X(\pi_\alpha(\xi))\) is nonempty. Since it is a subset of the
affine group of the finite module \(\mathbb Z_{2k}[\varepsilon]\), it is
finite. Hence the maximum defining \(m_X(\pi_\alpha(\xi))\) exists, and
\(\Sigma_X(\pi_\alpha(\xi))\) is finite and nonempty.

Therefore, $\Sigma(\xi)$ is finite and nonempty. For each \(\mathbf g\in\Sigma(\xi)\), by definition,
\[
C_H(\xi;\mathbf g)
=
X_H\cap
\bigcap_{\alpha\in A(\xi)}
\pi_\alpha^{-1}
\bigl(g_\alpha X[\varepsilon]\bigr)
\subseteq X_H.
\]
Thus every \(C_H(\xi;\mathbf g)\) is finite. Since \(\Sigma(\xi)\) is finite
and nonempty, the maximum defining \(M_H(\xi)\) exists, and the maximizing set
\(\Gamma_H(\xi)\) is finite. Consequently
\[
S_H(\xi)
=
\bigcup_{\mathbf g\in\Gamma_H(\xi)}
C_H(\xi;\mathbf g)
\]
is a finite union of finite subsets of \(X_H\). Hence
\(S_H(\xi)\subseteq X_H\) is finite.

It follows that
\[
E_H=
\{(\xi,\eta)\in X_H\times X_H:\eta\in S_H(\xi)\}
\]
is a subset of the finite set \(X_H\times X_H\). Therefore \(E_H\) is finite,
and \((X_H,E_H)\) is a finite directed graph.
\end{proof}

\begin{algo} This is the Hichert-type algorithm extended to three-voice first-species counterpoint.
\begin{algorithmic}[1]
\REQUIRE A strong dichotomy \((X/Y)\) of \(\mathbb Z_{2k}\); a harmonic relation \(H\subseteq X \times X\); a three-voice consonance $\xi=\begin{bmatrix}c\\ b\\ a\end{bmatrix}\in X_H$.
 \ENSURE The admitted-successor set \(S_H(\xi)\).
	\STATE $A(\xi) \gets \{\alpha\in\{LM,LU,MU\}:\pi_\alpha(\xi)\in X[\varepsilon]\}$.
		\FORALL{\(\alpha\in A(\xi)\)}
    		\STATE $\Sigma_\alpha(\xi) \gets \Sigma_X(\pi_\alpha(\xi))$.
				\ENDFOR
					\STATE $M\gets -\infty, \qquad \Gamma\gets\varnothing$.
						\FORALL{$\mathbf g=(g_\alpha)_{\alpha\in A(\xi)} \in \prod_{\alpha\in A(\xi)}\Sigma_\alpha(\xi)$}
    						\STATE $C(\xi;\mathbf g)\gets \left\{ \eta\in X_H:\pi_\alpha(\eta)\in X[\varepsilon]\cap g_\alpha X[\varepsilon] \text{ for every }\alpha\in A(\xi) \right\}$.
								\STATE $m\gets |C(\xi;\mathbf g)|$.
								\IF{\(m>M\)}
        							\STATE $M\gets m, \qquad \Gamma\gets\{\mathbf g\}$.
    							\ELSIF{\(m=M\)}
        							\STATE $\Gamma\gets\Gamma\cup\{\mathbf g\}$.
    							\ENDIF
						\ENDFOR
\STATE \RETURN $S_H(\xi) \gets \bigcup_{\mathbf g\in\Gamma} C(\xi;\mathbf g)$.

\end{algorithmic}
\end{algo}

\section{Three-voice first-species counterpoint in the Fuxian dichotomy}

We specialize the preceding construction to the twelve-tone case $\mathbb Z_{12}$.
\[
  (K/D)=\bigl(\{0,3,4,7,8,9\}/\{1,2,5,6,10,11\}\bigr),
\]
is the Fuxian interval dichotomy. It is a strong dichotomy, and its polarity is $p=T_2\cdot 5,\quad p(x)=2+5x$.

We retain the notation
\[
\begin{bmatrix}c\\b\\a\end{bmatrix}=a+\varepsilon_1.b+\varepsilon_2.c\in\mathbb Z_{12}[\varepsilon_1,\varepsilon_2],
\]
where \(b\) and \(c\) denote, respectively, the lower--middle and lower--upper
intervals. The middle--upper interval is \(c-b\).
\begin{definition}[Fuxian three-voice harmonic mask]
Define

\[
H_{Fux}=\left\{\begin{bmatrix}c\\b\end{bmatrix} \in K\times K \;\middle|\; 
\begin{matrix}c-b\notin\{\pm1,\pm2\}, \\ c-b\not\equiv 0 \Rightarrow c-b\not\equiv b \end{matrix} \right\}.
\]
\end{definition}
\begin{proposition}
In particular,
\[
|H_{Fux}|=26.
\]
The elements of \(H_{Fux}\) can be divided into 10 complete sonorities and
16 incomplete sonorities as follows.
\end{proposition}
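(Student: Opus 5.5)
The plan is a direct finite enumeration over the $6\times 6=36$ ordered pairs $(b,c)\in K\times K$, organised by the lower--middle interval $b$. For each row I will compute the middle--upper interval $c-b \bmod 12$ and apply the two conditions in the definition of $H_{Fux}$:
\begin{enumerate}
\item[(i)] $c-b\notin\{1,2,10,11\}$;
\item[(ii)] if $c\neq b$, then $c\not\equiv 2b \pmod{12}$.
\end{enumerate}
Condition (ii) is just a rewriting of $c-b\not\equiv b$, which is what makes it easy to check row by row. Nothing from the earlier propositions is needed beyond the definitions of $K$ and $H_{Fux}$.

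The row-by-row count I expect is as follows.
\begin{itemize}
\item[$b=0$.] The differences are the elements of $K$ itself, none of which lies in $\{\pm1,\pm2\}$. Condition (ii) only concerns $c=0$, which is the exempt case $c=b$. So all $6$ pairs survive.
\item[$b=3$.] Only $c=4$ (difference $1$) is excluded, and $2b=6\notin K$. This leaves $5$ pairs: $c\in\{0,3,7,8,9\}$.
\item[$b=4$.] The value $c=3$ (difference $-1$) fails (i), and $c=8=2b$ fails (ii). This leaves $4$ pairs: $c\in\{0,4,7,9\}$.
\item[$b=7$.] The values $c=8,9$ fail (i), and $2b\equiv 2\notin K$. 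This leaves $4$ pairs: $c\in\{0,3,4,7\}$.
\item[$b=8$.] The values $c=7,9$ fail (i), and $c=4\equiv 2b$ fails (ii). This leaves $3$ pairs: $c\in\{0,3,8\}$.
\item[$b=9$.] The values $c=7,8$ fail (i), and $2b\equiv 6\notin K$. This leaves $4$ pairs: $c\in\{0,3,4,9\}$.
\end{itemize}
Summing gives $6+5+4+4+3+4=26$.

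For the split, I will call a sonority \emph{complete} when the three pitch classes $a,\,a+b,\,a+c$ are pairwise distinct, i.e.\ $b\neq0$, $c\neq0$ and $b\neq c$. Otherwise it is \emph{incomplete}, meaning it contains a unison or octave doubling. The count then goes as follows.
\begin{itemize}
\item The row $b=0$ contributes $6$ incomplete sonorities.
\item Each remaining row contributes exactly two incomplete sonorities, namely $c=0$ and $c=b$, both of which survive the filter. That gives $5\cdot 2=10$ more.
\item Incomplete total: $6+10=16$.
\item Complete total: the remaining $3+2+2+1+2=10$, namely $(3,7),(3,8),(3,9),(4,7),(4,9),(7,3),(7,4),(8,3),(9,3),(9,4)$ written as $(b,c)$.
\end{itemize}
I would present these in a table listing, for each pair, the intervals $b$, $c$ and $c-b$.

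The main obstacle is not conceptual but bookkeeping. Residues mod $12$ must be reduced correctly: for example $-8\equiv4$ and $2\cdot 8\equiv4$. The implication in condition (ii) must also be read correctly, so that the diagonal $c=b$ is exempt while $(4,8)$ and $(8,4)$ are removed.

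One further point needs care: I have to fix the intended meaning of ``complete''. I will use the three-distinct-pitch-classes criterion, since it reproduces the stated $10/16$ split. As a sanity check, I will confirm that every complete sonority listed is a triad-like configuration whose middle--upper interval avoids steps.
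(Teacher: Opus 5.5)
Your proposal is correct and matches the paper's approach. The paper justifies the statement only by listing the 26 surviving pairs of $K\times K$ and classifying them as incomplete when $b=0$, $c=0$ or $b=c$, which is exactly your criterion. Your row counts $6+5+4+4+3+4=26$ and your $10/16$ split reproduce its two lists entry for entry.
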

\begin{align*}
\bigg\{\begin{bmatrix}7\\3\end{bmatrix},\begin{bmatrix}8\\3\end{bmatrix},\begin{bmatrix}9\\3\end{bmatrix},
\begin{bmatrix}7\\4\end{bmatrix},\begin{bmatrix}9\\4\end{bmatrix},
\begin{bmatrix}3\\7\end{bmatrix},
\begin{bmatrix}4\\7\end{bmatrix},
\begin{bmatrix}3\\8\end{bmatrix},
\begin{bmatrix}3\\9\end{bmatrix},\begin{bmatrix}4\\9\end{bmatrix}
\bigg\},
\end{align*}
and incomplete sonorities.
\begin{align*}
\bigg\{\begin{bmatrix}3\\0\end{bmatrix},\begin{bmatrix}4\\0\end{bmatrix},\begin{bmatrix}7\\0\end{bmatrix},\begin{bmatrix}8\\0\end{bmatrix},\begin{bmatrix}9\\0\end{bmatrix},
\begin{bmatrix}0\\3\end{bmatrix},\begin{bmatrix}0\\4\end{bmatrix},
\begin{bmatrix}0\\7\end{bmatrix},\begin{bmatrix}0\\8\end{bmatrix},\begin{bmatrix}0\\9\end{bmatrix},
\begin{bmatrix}0\\0\end{bmatrix},
\begin{bmatrix}3\\3\end{bmatrix},\begin{bmatrix}4\\4\end{bmatrix},\begin{bmatrix}7\\7\end{bmatrix},
\begin{bmatrix}8\\8\end{bmatrix}, \begin{bmatrix}9\\9\end{bmatrix}
\bigg\}.
\end{align*}
Here ``incomplete'' means that one of the two upper voices duplicates the
lower voice, or that the two upper voices duplicate each other; equivalently,
\(b=0\), \(c=0\), or \(b=c\).
The associated set of three-voice Fuxian consonances is
\[
K_{H_{Fux}}=\left\{\begin{bmatrix}c\\b\\a\end{bmatrix}:a\in\mathbb Z_{12},\ \begin{bmatrix}c\\b\end{bmatrix}\in H_{Fux}\right\}.
\]
\begin{corollary}
The set of Fuxian three-voice consonances $K_{H_{Fux}}$ has cardinality
\[
|K_{H_{Fux}}|=312.
\]
\end{corollary}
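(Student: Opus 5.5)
The plan is to compute the cardinality by a product argument. By definition, $K_{H_{Fux}}$ is the image of the map
\[
\mathbb Z_{12}\times H_{Fux}\longrightarrow \mathbb Z_{12}[\varepsilon_1,\varepsilon_2],\qquad \Bigl(a,\begin{bmatrix}c\\b\end{bmatrix}\Bigr)\longmapsto a+\varepsilon_1.b+\varepsilon_2.c .
\]
So it suffices to show that this map is injective. Then $|K_{H_{Fux}}|=|\mathbb Z_{12}|\cdot|H_{Fux}|$.

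First I will check injectivity. The module $\mathbb Z_{12}[\varepsilon_1,\varepsilon_2]$ is, as an additive group, the free $\mathbb Z_{12}$-module with basis $1,\varepsilon_1,\varepsilon_2$. Therefore an element $a+\varepsilon_1.b+\varepsilon_2.c$ determines its coordinates $(a,b,c)$ uniquely, which is precisely what the column notation encodes. In particular, two triples with distinct $a$, or with distinct pairs $(b,c)$, give distinct elements. Thus the map is a bijection onto $K_{H_{Fux}}$.

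Next I will invoke the preceding proposition, which gives $|H_{Fux}|=26$. Combined with $|\mathbb Z_{12}|=12$, this yields $|K_{H_{Fux}}|=12\cdot 26=312$.

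The only step that carries real content is the count $|H_{Fux}|=26$, and I take it as given from the proposition. If I wanted to double-check it, I would enumerate $b\in K$. For each $b$, I would list the $c\in K$ with $c-b\notin\{\pm1,\pm2\}$, and then remove those with $c-b\neq 0$ and $c=2b$. This means removing $c=6$ when $b=3$ (already outside $K$), $c=8$ when $b=4$, $c=2$ when $b=7$ (outside $K$), $c=4$ when $b=8$, and $c=6$ when $b=9$ (outside $K$), with the $b=0$ case trivial. Counting the survivors should recover the listed ten complete and sixteen incomplete sonorities.
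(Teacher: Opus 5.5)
Your proposal is correct and follows the paper's implicit argument. Since the coordinates $(a,b,c)$ are uniquely determined, $K_{H_{Fux}}$ is in bijection with $\mathbb Z_{12}\times H_{Fux}$, so $|K_{H_{Fux}}|=12\cdot|H_{Fux}|=12\cdot 26=312$. Your optional recount of $H_{Fux}$ also checks out: it gives $6+5+4+4+3+4=26$ for $b=0,3,4,7,8,9$.
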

Then
\[
A
\begin{bmatrix}
c\\ b\\ a
\end{bmatrix}
=
\{LM,LU\}
\cup
\begin{cases}
\{MU\}, & c-b\in K,\\
\varnothing, & c-b\notin K.
\end{cases}.
\]

In particular, if
\[
c-b=7,
\]
then the middle--upper pair is active. Hence the Hichert symmetries used in the
middle--upper projection are precisely the ordinary two-voice Fuxian Hichert
symmetries of $(a+b)+\varepsilon .7$.
Here \(K\) consists of the unison, minor third, major third, perfect fifth, minor sixth, and major sixth. Hichert's algorithm computes the contrapuntal symmetries for each consonant interval, and Table of the original exposition records the resulting forbidden successors ~\citet{AgustinAquinoJunodMazzola2015}. For a present interval \(\varepsilon .r\), with \(r\in K\), and a prescribed successor cantus-firmus class \(j\in\Z_{12}\), a successor has the form
\[
  j+\varepsilon .\ell,
  \qquad
  \ell\in K.
\]
The table ~\ref{tab:fuxian-forbidden} lists the forbidden values of \(\ell\). Equivalently, if \(F_{r,j}\subseteq K\) denotes the forbidden set, then the admissible successors are
\[
  \{\,j+\varepsilon.\ell \mid \ell\in K\setminus F_{r,j}\,\}.
\]

For the Fuxian dichotomy \((K/D)\), the forbidden sets are as follows.
\begin{table}[H]
\centering
\begin{tabular}{c|l}
\toprule
r & $F_{r,j}$ \\
\midrule
0 & $\{0\}\ \text{if } j\equiv 0 \pmod 6;\ \varnothing\ \text{otherwise}$, \\
3 & $\{3,9\}\ \text{if } j\equiv 0 \pmod 3;\ \varnothing\ \text{otherwise}$, \\
4 & $\{0,4,8\}\ \text{if } j\equiv 0 \pmod 2;\ \varnothing\ \text{otherwise}$,\\
7 & $\{7\}\ \text{for all } j, $\\
8 & $\{8\}\ \text{if } j\equiv 0 \pmod 6;\ \varnothing\ \text{otherwise}$,\\
9 & $\{3,9\}\ \text{if } j\equiv 0 \pmod 3;\ \varnothing\ \text{otherwise}$.\\
\bottomrule
\end{tabular}
\caption{Forbidden successor intervals in the Fuxian dichotomy, following \citet{AgustinAquinoJunodMazzola2015}.\label{tab:fuxian-forbidden}}
\end{table}
\begin{proposition}
In the Fuxian specialization, no admitted transition contains a
successor \(7\to 7\) in an active pairwise projection.
In particular, proper parallel fifths are excluded in every active
pairwise projection.
\end{proposition}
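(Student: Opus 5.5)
The plan is to reduce the statement to the two-voice forbidden table (Table~\ref{tab:fuxian-forbidden}), and specifically to its row \(r=7\). Suppose \(\xi\longrightarrow\eta\) is admitted, i.e.\ \(\eta\in S_H(\xi)\), and fix \(\alpha\in A(\xi)\) with \(\pi_\alpha(\xi)=x+\varepsilon.7\). We must show that \(\pi_\alpha(\eta)\) does not have interval \(7\).

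\textbf{Step 1: unwind the definitions.} By the definition of \(S_H(\xi)\), there is some \(\mathbf g\in\Gamma_H(\xi)\subseteq\Sigma(\xi)\) with \(\eta\in C_H(\xi;\mathbf g)\). Then
\[
\pi_\alpha(\eta)\in g_\alpha X[\varepsilon],
\]
with \(g_\alpha\in\Sigma_X(\pi_\alpha(\xi))\). Also \(\eta\in X_H\). For \(\alpha\in\{LM,LU\}\) this gives \(\pi_\alpha(\eta)\in K[\varepsilon]\) directly from \(H_{Fux}\subseteq K\times K\). For \(\alpha=MU\) I would use the version written in the algorithm, which requires \(\pi_\alpha(\eta)\in X[\varepsilon]\cap g_\alpha X[\varepsilon]\). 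In either case
\[
\pi_\alpha(\eta)\in K[\varepsilon]\cap g_\alpha K[\varepsilon]\subseteq S_X(\pi_\alpha(\xi)).
\]
So every active projection of an admitted transition is an ordinary two-voice admitted successor of the corresponding projection of \(\xi\).

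\textbf{Step 2: apply the table.} Write \(\pi_\alpha(\eta)=j+\varepsilon.\ell\). Row \(r=7\) of Table~\ref{tab:fuxian-forbidden} gives \(F_{7,j}=\{7\}\) for every \(j\). Since the admissible successors are exactly \(j+\varepsilon.\ell\) with \(\ell\in K\setminus F_{7,j}\), we get \(\ell\neq 7\). This excludes \(7\to7\) in the projection \(\alpha\). Because \(\alpha\) was arbitrary among active pairs, and a parallel fifth in a pair means exactly a \(7\to7\) succession in that pair's projection, parallel fifths are excluded in every active projection.

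\textbf{Main obstacle.} The delicate point is identifying \(S_X(\zeta)\) with the table. The table is quoted from Hichert's algorithm as computed in the cited literature, and I would invoke it as stated. One also has to make sure the table describes the full union \(S_X(\zeta)\) over all maximizing symmetries, not just one of them. If that is in doubt, I would verify directly that no \(g\in\Sigma_X(x+\varepsilon.7)\) has \(j+\varepsilon.7\in gK[\varepsilon]\). The argument would be this: \(g\) must send \(x+\varepsilon.7\) into \(Y[\varepsilon]\), and the maximizing symmetries for \(r=7\) are, up to translation in the base, of the form \(T^{\varepsilon.s}\cdot(\text{unit})\) acting fiberwise with a constant map on each fiber. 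Such a map puts \(7\) in the dissonant image uniformly in \(j\). This would be a finite check in \(\mathbb Z_{12}[\varepsilon]\).

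A smaller subtlety is the \(MU\) projection, whose cantus-firmus component \(a+b\) varies. The table statement is uniform in \(j\), however, so this causes no difficulty.
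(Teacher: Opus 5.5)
Your argument is correct and is essentially the justification the paper intends, since it gives no separate proof: every active projection of an admitted target lies in $g_\alpha K[\varepsilon]$ for some ordinary two-voice Hichert symmetry $g_\alpha\in\Sigma_X(\pi_\alpha(\xi))$, so the claim reduces to the row $r=7$ of Table~\ref{tab:fuxian-forbidden}, where $F_{7,j}=\{7\}$ for every $j$. Your detour through the algorithm's version of $C_H$ for the $MU$ pair is not needed: a target interval $\ell=7$ already lies in $K$, so $\pi_\alpha(\eta)\in K[\varepsilon]\cap g_\alpha K[\varepsilon]\subseteq S_X(\pi_\alpha(\xi))$ holds under either definition.
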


For notational economy, the elements of $K_{H_{Fux}}$ will be
encoded by formal symbols. The explicit correspondence between these symbols
and the three-voice Fuxian consonances, together with the resulting
admitted-successor table, is given in Appendix~\ref{app:forbidden-target-tables}.
\begin{example}
The following sequence ~\ref{fig:three-voice-example} gives an example\footnote{This is Okumura's own construction, but the lower voice is adapted from the subject of J. S. Bach's \textit{The Art of Fugue}, BWV 1080 \citep{BachDigitalSourceBWV1080SecondPrint}.} of three-voice first-species counterpoint in the proposed model.
\begin{align*}
 &\xi_{1} = \begin{bmatrix}0\\3\\2\end{bmatrix},\, \xi_{2} = \begin{bmatrix}3\\8\\9\end{bmatrix},\, \xi_{3} =\begin{bmatrix}7\\4\\5\end{bmatrix},\, 
\xi_{4} = \begin{bmatrix}3\\7\\2\end{bmatrix} ,\\
&\xi_{5} = \begin{bmatrix}3\\8\\1\end{bmatrix} ,\, \xi_{6} = \begin{bmatrix}3\\7\\2\end{bmatrix} , \,
\xi_{7} = \begin{bmatrix}3\\8\\4\end{bmatrix},\, \xi_{8} = \begin{bmatrix}4\\7\\5\end{bmatrix} ,\\
&\xi_{9} = \begin{bmatrix}7\\3\\7\end{bmatrix} ,\, \xi_{10} = \begin{bmatrix}4\\7\\5\end{bmatrix} ,\,\xi_{11} = \begin{bmatrix}3\\8\\4\end{bmatrix},\,\xi_{12} = \begin{bmatrix}0\\3\\2\end{bmatrix}.
\end{align*}
\begin{figure}[H]
\centering
 \includegraphics[width=\columnwidth]{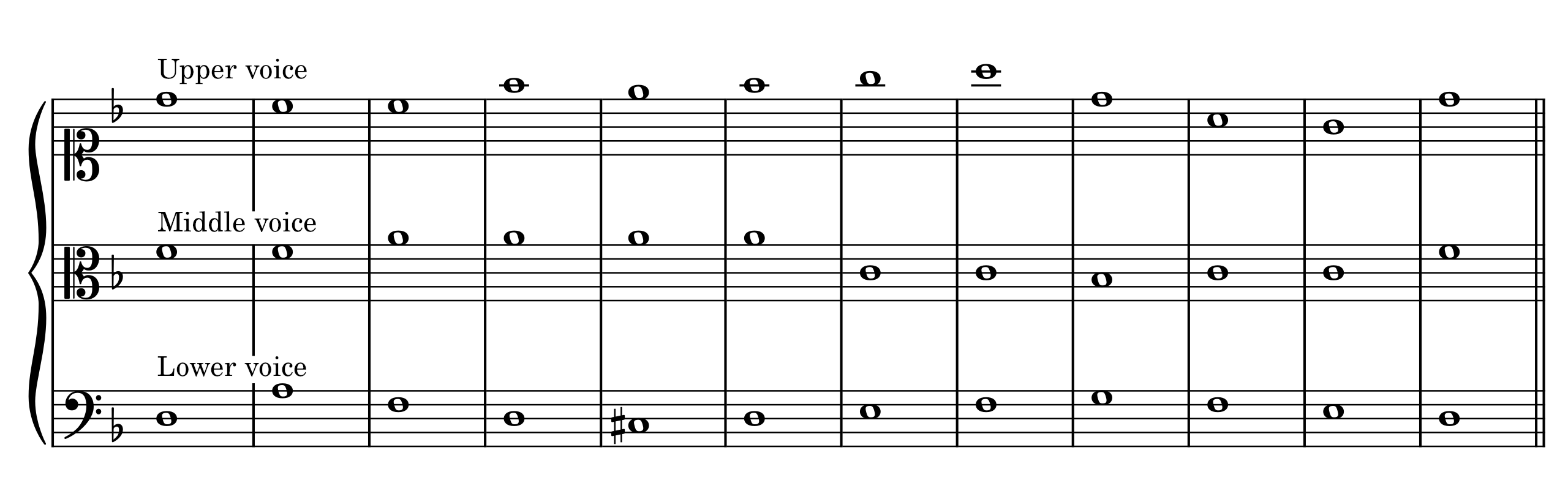}
\caption{An example of a three-voice first-species counterpoint sequence.
All 11 successive transitions are admitted by the proposed successor relation.}
\label{fig:three-voice-example}
\end{figure}
\end{example}

\section{Discussion and conclusion}

The construction proposed in this paper is motivated by a broader
problem: to describe harmony and counterpoint within a common
mathematical framework. Classical pedagogy often presents harmonic
admissibility and contrapuntal voice leading as distinct domains. In
practice, however, several rules have both harmonic and contrapuntal
interpretations. The present three-voice model is intended as a first
step toward a formal setting in which these two aspects can be treated
simultaneously.

The passage from two voices to three voices is essential for this
purpose. In the ordinary two-voice Mazzola's model, admissibility is
defined for a single interval between a {\it cantus firmus} and a {\it discantus}. By
contrast, the present model represents a three-voice sonority as a
bass-rooted pair of intervals and separates two levels of constraint:
the harmonic mask \(H\), which specifies admissible vertical sonorities,
and the pairwise Hichert successor relation, which governs contrapuntal
motion along the active two-voice projections. This fibered construction
therefore allows harmonic and contrapuntal conditions to interact in a
single finite successor structure.

Two familiar cases illustrate this interaction. First, a fourth above the bass is excluded in the Fuxian specialization
because the bass-rooted intervals are required to lie in the consonance
set \(K\), whereas the perfect fourth is contained in the dissonance set
\(D\). This is precisely where the bass-rooted character of the
construction matters: the vertical admissibility of a three-voice sonority
is evaluated through the two intervals above the lower voice. Thus the
traditional treatment of the fourth against the bass can be read here as
both a harmonic restriction on the sonority and a contrapuntal restriction
on the bass-related interval. Second, parallel fifths are excluded by the
same pairwise Hichert mechanism that excludes them in the ordinary
two-voice model. When the relevant pair of voices forms an active
projection, a proper parallel fifth cannot occur as an admitted successor.
In this sense, the prohibition of consecutive fifths is not added as an
external rule, but follows from the same algebraic successor mechanism
used for the two-voice Fuxian dichotomy.

The model should nevertheless not be interpreted as a complete
mathematical theory of three-voice Fuxian counterpoint or of tonal
harmony. It does not yet encode registral constraints, melodic
well-formedness conditions, cadential formulas, voice crossing,
voice-overlap restrictions, or stylistic distinctions between different
uses of incomplete sonorities. Moreover, the harmonic mask used here is
only one possible choice. Other masks would give rise to different
counterpoint worlds, and comparing these worlds is a natural continuation
of the present work.
\[
\scalebox{1}{$
\begin{array}{c|c}
\text{Harmonic statement} & \text{Counterpoint-theoretic formulation}\\
\hline
\text{Allowed chord}
&
\bx c\\b \ex \in H_{Fux}
\\[2mm]

\text{Bass fourth forbidden}
&
b,c\neq 5,\quad \text{since } H_{Fux}\subset K \times K
\\[2mm]

\text{Incomplete chord allowed}
&
b=0,\quad c=0,\quad \text{or}\quad b=c
\\[2mm]

\text{First inversion allowed}
&
c-b\in\{5,6\}\ \text{may occur}
\\[2mm]

\text{Parallel fifth forbidden}
&
7\to 7\ \text{is forbidden in every active pairwise projection}
\\[2mm]
\end{array}
$}
\]

There are two immediate directions for further research.

The first is to relate the present bass-rooted three-voice construction to
Neo-Riemannian theory. Since the proposed model treats sonorities as
structured pitch-class configurations and studies admissible transitions
between them, it is natural to ask how its successor relation interacts
with transformational theories of triadic and set-class motion. 

The second direction is to extend the construction to three-voice
second-species counterpoint. Such an extension would require a
mathematical treatment of metrically unequal voices and of passing or
neighboring dissonances, and could build on projection-oriented approaches
to second-species counterpoint.

In conclusion, this paper extends the Mazzola--Hichert framework from
ordinary two-voice first-species counterpoint to a three-voice
bass-rooted setting. The main contribution is the introduction of a
bass-rooted harmonic mask together with a fibered pairwise successor relation. This
combination provides a finite algebraic model in which vertical
admissibility and contrapuntal motion are considered in the same formal
space. The resulting framework is deliberately modest, but it indicates a
possible route toward a unified mathematical description of harmony and
counterpoint.

\section*{AI usage disclosure}

During the preparation of this manuscript, the authors used OpenAI ChatGPT
(GPT-5.5 Thinking) for language refinement,
LaTeX editing assistance, and improvement of the clarity of explanatory
passages. The tool was used to improve readability and presentation for a
manuscript written in a non-native language. It was not used to generate
mathematical results, proofs, data, references, or figures. All
AI-assisted outputs were critically reviewed, edited, and verified by the
authors, who take full responsibility for the content of the manuscript. 

\appendix
\section{Forbidden-target tables}\label{app:forbidden-target-tables}
For a source label in the first column and a lower-voice displacement \(j\)
specified in the header, the second column lists the forbidden target
labels. Complements such as \(K_{H_{Fux}} \setminus A\) are taken with respect to
the 26 labels listed above. The symbol \(C_\beta^\gamma\) denotes the class $\begin{bmatrix}c\\b\\a\end{bmatrix}$,
whose lower--middle interval is denoted by \(\beta\) and whose lower--upper interval is denoted by \(\gamma\). Enharmonic spellings are chosen according to the chordal interpretation.
\[
\def\arraystretch{1.2} \begin{array}{c|c|c|c|c|c|c|c|c|c|c|c|c}
\bx 0\\0\\a \ex& \bx 3\\0\\a \ex& \bx 4\\0\\a \ex& \bx 7\\0\\a \ex& \bx 8\\0\\a \ex& \bx 9\\0\\a \ex& \bx 0\\3\\a \ex& \bx 3\\3\\a \ex& \bx 7\\3\\a \ex& \bx 8\\3\\a \ex& \bx 9\\3\\a \ex& \bx 0\\4\\a \ex& \bx 4\\4\\a \ex \\
\hline \hline
C_0^0 &C_0^{\flat 3} &C_0^3 &C_0^5 &C_0^{\flat 6} &C_0^6 &C_{\flat 3}^0 &C_{\flat 3}^{\flat 3} &C_{\flat 3}^5 &C_{\flat 3}^{\flat 6} &C_{\flat 3}^6 &C_3^0 &C_3^3 
\end{array}
\]
\[
\def\arraystretch{1.2} \begin{array}{c|c|c|c|c|c|c|c|c|c|c|c|c}
\bx 7\\4\\a \ex & \bx 9\\4\\a \ex& \bx 0\\7\\a \ex& \bx 3\\7\\a \ex& \bx 4\\7\\a \ex& \bx 7\\7\\a \ex& \bx 0\\8\\a \ex& \bx 3\\8\\a \ex& \bx 8\\8\\a \ex& \bx 0\\9\\a \ex& \bx 3\\9\\a \ex& \bx 4\\9\\a \ex& \bx 9\\9\\a \ex \\
\hline \hline
C_3^5 & C_3^6 &C_5^0 &C_5^{\flat3} &C_5^3 &C_5^5 &C_{\flat 6}^0 &C_{\flat 6}^{\flat3} &C_{\flat 6}^{\flat 6} &C_6^0 &C_6^{\flat3} &C_6^3 &C_6^6 
\end{array}
\]

\[
\def\arraystretch{1.15}\begin{array}{c|l}
K_{H_{Fux}} &  j=0,6\\
\hline \hline
C_0^0,C_0^3,C_3^3,C_{\flat 6}^0&K_{H_{Fux}} \setminus \left\{C_{\flat 3}^{\flat 3},C_{\flat 3}^5,C_5^{\flat 3},C_5^5,C_6^6\right\}
\\
\hline C_0^{\flat 3},C_0^6&K_{H_{Fux}} \setminus \left\{C_{\flat 3}^5,C_{\flat 3}^{\flat 6},C_3^0,C_3^3,C_3^5,C_5^5,C_{\flat 6}^0,C_{\flat 6}^{\flat 6}\right\}
\\
\hline C_0^5&K_{H_{Fux}} \setminus\left\{C_{\flat 3}^0,C_{\flat 3}^{\flat 3},C_3^0,C_3^3,C_5^{\flat 3},C_5^3,C_{\flat 6}^0,C_{\flat 6}^{\flat 6}\right\}
\\
\hline C_0^{\flat 6},C_3^0&K_{H_{Fux}} \setminus \left\{C_{\flat 3}^{\flat 3},C_{\flat 3}^6,C_6^{\flat 3},C_6^6\right\}
\\
\hline C_{\flat 3}^0,C_6^0& K_{H_{Fux}} \setminus \left\{C_0^3,C_0^5,C_0^{\flat 6},C_3^3,C_3^5,C_5^{\flat 3},C_5^3,C_5^5,C_{\flat 6}^{\flat 3},C_{\flat 6}^{\flat 6}\right\}
\\
\hline C_{\flat 3}^{\flat 3},C_6^6& K_{H_{Fux}} \setminus \left\{C_0^3,C_0^5,C_0^{\flat 6},C_3^3,C_3^5,C_5^5,C_{\flat 6}^{\flat 6}\right\}
\\
\hline C_{\flat 3}^5&K_{H_{Fux}} \setminus \left\{C_0^0,C_0^{\flat 3},C_0^3,C_0^{\flat 6},C_0^6,C_3^0,C_3^3,C_5^3,C_{\flat 6}^0,C_{\flat 6}^{\flat 3},C_{\flat 6}^{\flat 6}\right\}
\\
\hline C_{\flat 3}^{\flat 6}&\left\{C_0^{\flat 6},C_0^6,C_{\flat 3}^0,C_{\flat 3}^{\flat 3},C_{\flat 3}^5,C_{\flat 3}^{\flat 6},C_{\flat 3}^6,C_3^6,C_{\flat 6}^{\flat 6},C_6^0,C_6^{\flat 3},C_6^3,C_6^6\right\}
\\
\hline C_{\flat 3}^6,C_6^{\flat 3}&K_{H_{Fux}} \setminus \left\{C_0^0,C_0^3,C_0^5,C_0^{\flat 6},C_3^0,C_3^3,C_3^5,C_5^0,C_5^3,C_5^5,C_{\flat 6}^0,C_{\flat 6}^{\flat 6}\right\}
\\
\hline C_3^5&K_{H_{Fux}} \setminus\left\{C_{\flat 3}^0,C_{\flat 3}^{\flat 3},C_5^0,C_5^{\flat 3},C_6^0,C_6^3,C_6^6\right\}
\\
\hline C_3^6&K_{H_{Fux}} \setminus\left\{C_{\flat 3}^0,C_{\flat 3}^5,C_{\flat 3}^{\flat 6},C_5^0,C_5^3,C_5^5,C_6^0,C_6^3\right\}
\\
\hline C_5^0&K_{H_{Fux}} \setminus \left\{C_0^{\flat 3},C_0^5,C_0^6,C_{\flat 3}^{\flat 3},C_{\flat 3}^5,C_{\flat 3}^6,C_3^5,C_3^6,C_{\flat 6}^{\flat 3},C_6^{\flat 3},C_6^6\right\}
\\
\hline C_5^{\flat 3}&K_{H_{Fux}} \setminus\left\{C_0^0,C_0^3,C_0^5,C_{\flat 3}^5,C_3^3,C_3^5,C_{\flat 6}^0,C_{\flat 6}^{\flat 6},C_6^0,C_6^3\right\}
\\
\hline C_5^3&K_{H_{Fux}} \setminus \left\{C_0^{\flat 3},C_0^5,C_0^6,C_{\flat 3}^{\flat 3},C_{\flat 3}^5,C_3^6,C_{\flat 6}^{\flat 3},C_6^6\right\}
\\
\hline C_5^5&\left\{C_0^5,C_0^6,C_{\flat 3}^0,C_{\flat 3}^5,C_{\flat 3}^{\flat 6},C_3^3,C_3^5,C_3^6,C_5^0,C_5^{\flat 3},C_5^3,C_5^5\right\}
\\
\hline C_{\flat 6}^{\flat 3}&K_{H_{Fux}} \setminus \left\{C_0^0,C_0^3,C_0^{\flat 6},C_{\flat 3}^0,C_{\flat 3}^5,C_3^0,C_3^3,C_3^5,C_5^3,C_5^5\right\}
\\
\hline C_{\flat 6}^{\flat 6}&K_{H_{Fux}} \setminus \left\{C_{\flat 3}^0,C_{\flat 3}^{\flat 3},C_{\flat 3}^5,C_5^{\flat 3},C_5^3,C_5^5,C_6^0,C_6^3\right\}
\\
\hline C_6^3&K_{H_{Fux}} \setminus\left\{C_0^{\flat 3},C_0^6,C_3^5,C_5^{\flat 3},C_5^5\right\}
\end{array}
\]

\[
\def\arraystretch{1.15}\begin{array}{c|l}
K_{H_{Fux}} &  j=1,7\\
\hline \hline 
C_0^0,C_0^3,C_3^3,C_{\flat 6}^0&\left\{C_{\flat 3}^{\flat 3},C_{\flat 3}^5,C_{\flat 3}^{\flat 6},C_3^6,C_5^0,C_5^{\flat 3},C_5^5,C_6^6\right\}
\\
\hline C_0^{\flat 3},C_0^6&\left\{C_0^6,C_{\flat 3}^0,C_{\flat 3}^{\flat 6},C_{\flat 3}^6,C_3^6,C_5^0,C_6^0,C_6^{\flat 3},C_6^3,C_6^6\right\}
\\
\hline C_0^5&\left\{C_0^5,C_{\flat 3}^5,C_{\flat 3}^{\flat 6},C_{\flat 3}^6,C_3^5,C_3^6,C_5^0,C_5^5,C_{\flat 6}^{\flat 3},C_6^0,C_6^{\flat 3},C_6^3,C_6^6\right\}
\\
\hline C_0^{\flat 6},C_3^0&\left\{C_0^{\flat 6},C_0^6,C_{\flat 3}^0,C_{\flat 3}^{\flat 6},C_3^0,C_3^6,C_5^0,C_5^{\flat 3},C_5^3\right\}
\\
\hline C_{\flat 3}^0,C_6^0&\left\{C_0^6,C_{\flat 3}^{\flat 6},C_{\flat 3}^6,C_3^6,C_5^0,C_5^3,C_6^{\flat 3},C_6^6\right\}
\\
\hline C_{\flat 3}^{\flat 3},C_6^6&\left\{C_0^{\flat 6},C_0^6,C_{\flat 3}^0,C_{\flat 3}^{\flat 6},C_{\flat 3}^6,C_3^6,C_5^0,C_5^3,C_{\flat 6}^{\flat 6},C_6^0,C_6^{\flat 3},C_6^3,C_6^6\right\}
\\
\hline C_{\flat 3}^5&K_{H_{Fux}} \setminus \left\{C_0^{\flat 3},C_0^6,C_{\flat 3}^0,C_{\flat 3}^{\flat 3},C_5^{\flat 3},C_5^3,C_{\flat 6}^{\flat 3},C_6^0,C_6^3,C_6^6\right\}
\\
\hline C_{\flat 3}^{\flat 6}&\left\{C_0^{\flat 6},C_0^6,C_{\flat 3}^{\flat 6},C_{\flat 3}^6,C_3^6,C_{\flat 6}^{\flat 6},C_6^6\right\}
\\
\hline C_{\flat 3}^6,C_3^6,C_6^{\flat 3}&\varnothing
\\
\hline C_3^5&\left\{C_0^{\flat 3},C_0^5,C_0^6,C_{\flat 3}^0,C_{\flat 3}^5,C_{\flat 3}^6,C_3^5,C_3^6,C_5^0,C_5^5,C_6^0,C_6^{\flat 3}\right\}
\\
\hline C_5^0&\left\{C_5^0,C_5^{\flat 3},C_5^3,C_5^5\right\}
\\
\hline C_5^{\flat 3}&\left\{C_0^{\flat 6},C_0^6,C_{\flat 3}^0,C_{\flat 3}^{\flat 6},C_3^0,C_3^6,C_5^0,C_5^{\flat 3},C_5^3,C_5^5\right\}
\\
\hline C_5^3&\left\{C_0^{\flat 3},C_0^6,C_{\flat 3}^0,C_{\flat 3}^6,C_3^6,C_5^0,C_5^{\flat 3},C_5^3,C_5^5,C_6^0,C_6^{\flat 3}\right\}
\\
\hline C_5^5&K_{H_{Fux}} \setminus\left\{C_0^{\flat 3},C_0^3,C_0^{\flat 6},C_3^0,C_3^3,C_{\flat 6}^0,C_{\flat 6}^{\flat 3},C_{\flat 6}^{\flat 6},C_6^0,C_6^3\right\}
\\
\hline C_{\flat 6}^{\flat 3}&\left\{C_0^5,C_{\flat 3}^{\flat 6},C_{\flat 3}^6,C_3^6,C_5^0,C_{\flat 6}^0,C_{\flat 6}^{\flat 3},C_{\flat 6}^{\flat 6},C_6^0,C_6^{\flat 3},C_6^3,C_6^6\right\}
\\
\hline C_{\flat 6}^{\flat 6}&K_{H_{Fux}} \setminus\left\{C_0^0,C_0^{\flat 3},C_0^3,C_0^5,C_{\flat 3}^0,C_3^0,C_3^3,C_3^5,C_{\flat 6}^0,C_{\flat 6}^{\flat 3},C_6^0,C_6^{\flat 3}\right\}
\\
\hline C_6^3&\left\{C_0^5,C_{\flat 3}^{\flat 6},C_{\flat 3}^6,C_3^6,C_5^0,C_{\flat 6}^{\flat 3},C_6^{\flat 3},C_6^3\right\}
\end{array}
\]

\[
\def\arraystretch{1.15}\begin{array}{c|l}
K_{H_{Fux}} &  j=2,8\\
\hline \hline 
C_0^0,C_0^3,C_3^3,C_{\flat 6}^0&K_{H_{Fux}} \setminus \left\{C_{\flat 3}^{\flat 3},C_{\flat 3}^5,C_5^{\flat 3},C_5^5,C_6^6\right\}
\\
\hline C_0^{\flat 3},C_0^6&\left\{C_0^5,C_{\flat 3}^5,C_{\flat 3}^{\flat 6},C_{\flat 3}^6,C_3^5,C_5^3,C_5^5,C_6^0,C_6^{\flat 3},C_6^3,C_6^6\right\}
\\
\hline C_0^5&\left\{C_0^5,C_{\flat 3}^5,C_{\flat 3}^{\flat 6},C_{\flat 3}^6,C_3^5,C_3^6,C_5^0,C_5^5,C_{\flat 6}^{\flat 3},C_6^0,C_6^{\flat 3},C_6^3,C_6^6\right\}
\\
\hline C_0^{\flat 6},C_3^0&K_{H_{Fux}} \setminus \left\{C_{\flat 3}^{\flat 3},C_{\flat 3}^6,C_6^{\flat 3},C_6^6\right\}
\\
\hline C_{\flat 3}^0,C_6^0&\left\{C_0^6,C_{\flat 3}^{\flat 6},C_{\flat 3}^6,C_3^5,C_3^6,C_5^3,C_6^{\flat 3},C_6^6\right\}
\\
\hline C_{\flat 3}^{\flat 3},C_6^6&\left\{C_0^3,C_0^5,C_0^6,C_{\flat 3}^0,C_{\flat 3}^5,C_{\flat 3}^{\flat 6},C_3^3,C_3^5,C_3^6,C_5^0,C_5^{\flat 3},C_5^3,C_5^5\right\}
\\
\hline C_{\flat 3}^5&\left\{C_0^5,C_{\flat 3}^{\flat 3},C_{\flat 3}^5,C_{\flat 3}^{\flat 6},C_3^5,C_3^6,C_5^0,C_5^{\flat 3},C_5^5,C_6^6\right\}
\\
\hline C_{\flat 3}^{\flat 6}&\left\{C_0^{\flat 6},C_0^6,C_{\flat 3}^{\flat 6},C_{\flat 3}^6,C_3^6,C_{\flat 6}^{\flat 6},C_6^6\right\}
\\
\hline C_{\flat 3}^6,C_6^{\flat 3}&\varnothing
\\
\hline C_3^5&K_{H_{Fux}} \setminus \left\{C_{\flat 3}^0,C_{\flat 3}^{\flat 3},C_5^{\flat 3},C_5^3,C_6^0,C_6^3,C_6^6\right\}
\\
\hline C_3^6&\left\{C_0^0,C_0^{\flat 3},C_0^3,C_0^5,C_0^{\flat 6},C_0^6,C_3^0,C_3^3,C_3^5,C_3^6,C_{\flat 6}^0,C_{\flat 6}^{\flat 3},C_{\flat 6}^{\flat 6}\right\}
\\
\hline C_5^0&K_{H_{Fux}} \setminus\left\{C_0^{\flat 3},C_0^5,C_0^6,C_{\flat 3}^{\flat 3},C_{\flat 3}^5,C_{\flat 3}^6,C_3^5,C_3^6,C_{\flat 6}^{\flat 3},C_6^{\flat 3},C_6^6\right\}
\\
\hline C_5^{\flat 3}&\left\{C_0^{\flat 6},C_0^6,C_{\flat 3}^0,C_{\flat 3}^{\flat 6},C_3^0,C_3^6,C_5^0,C_5^{\flat 3},C_5^3,C_5^5\right\}
\\
\hline C_5^3&K_{H_{Fux}} \setminus \left\{C_0^{\flat 3},C_0^5,C_0^6,C_{\flat 3}^{\flat 3},C_{\flat 3}^5,C_3^5,C_{\flat 6}^{\flat 3},C_6^6\right\}
\\
\hline C_5^5&\left\{C_0^5,C_0^6,C_{\flat 3}^0,C_{\flat 3}^5,C_{\flat 3}^{\flat 6},C_3^5,C_3^6,C_5^0,C_5^{\flat 3},C_5^3,C_5^5,C_{\flat 6}^{\flat 6}\right\}
\\
\hline C_{\flat 6}^{\flat 3}&\left\{C_0^5,C_{\flat 3}^{\flat 6},C_{\flat 3}^6,C_3^6,C_5^0,C_{\flat 6}^0,C_{\flat 6}^{\flat 3},C_{\flat 6}^{\flat 6},C_6^0,C_6^{\flat 3},C_6^3,C_6^6\right\}
\\
\hline C_{\flat 6}^{\flat 6}&K_{H_{Fux}} \setminus\left\{C_{\flat 3}^0,C_{\flat 3}^{\flat 3},C_{\flat 3}^5,C_5^{\flat 3},C_5^3,C_5^5,C_6^0,C_6^3\right\}
\\
\hline C_6^3&K_{H_{Fux}} \setminus\left\{C_0^{\flat 3},C_0^6,C_{\flat 3}^{\flat 3},C_{\flat 3}^5,C_3^5,C_5^{\flat 3},C_5^5,C_6^6\right\}
\end{array}
\]

\[
\def\arraystretch{1.15}\begin{array}{c|l}
K_{H_{Fux}} &  j=3,9\\
\hline \hline 
C_0^0,C_0^3,C_3^3,C_{\flat 6}^0
&
C_{\flat 3}^{\flat 3},C_{\flat 3}^5,C_{\flat 3}^{\flat 6},C_3^6,C_5^0,C_5^{\flat 3},C_5^5,C_6^6
\\
\hline C_0^{\flat 3},C_0^6
&
K_{H_{Fux}} \setminus\left\{C_{\flat 3}^5,C_{\flat 3}^{\flat 6},C_3^0,C_3^3,C_3^5,C_5^5,C_{\flat 6}^0,C_{\flat 6}^{\flat 6}\right\}
\\
\hline C_0^5
&
K_{H_{Fux}} \setminus \left\{C_{\flat 3}^0,C_{\flat 3}^{\flat 3},C_3^0,C_3^3,C_5^{\flat 3},C_5^3,C_{\flat 6}^0,C_{\flat 6}^{\flat 6}\right\}
\\
\hline C_0^{\flat 6},C_3^0
&
\left\{C_0^{\flat 6},C_0^6,C_{\flat 3}^0,C_{\flat 3}^{\flat 6},C_3^0,C_3^6,C_5^0,C_5^{\flat 3},C_5^3\right\}
\\
\hline C_{\flat 3}^0,C_6^0
&
K_{H_{Fux}} \setminus \left\{C_0^3,C_0^5,C_0^{\flat 6},C_3^3,C_3^5,C_5^{\flat 3},C_5^3,C_5^5,C_{\flat 6}^{\flat 3},C_{\flat 6}^{\flat 6}\right\}
\\
\hline C_{\flat 3}^{\flat 3},C_6^6
&
K_{H_{Fux}} \setminus \left\{C_0^3,C_0^5,C_0^{\flat 6},C_3^3,C_3^5,C_5^5,C_{\flat 6}^{\flat 6}\right\}
\\
\hline C_{\flat 3}^5
&
K_{H_{Fux}} \setminus \left\{C_0^{\flat 3},C_0^6,C_5^{\flat 3},C_5^3,C_{\flat 6}^{\flat 3}\right\}
\\
\hline C_{\flat 3}^{\flat 6}
&
\left\{C_0^{\flat 6},C_0^6,C_{\flat 3}^0,C_{\flat 3}^{\flat 3},C_{\flat 3}^5,C_{\flat 3}^{\flat 6},C_{\flat 3}^6,C_3^6,C_{\flat 6}^{\flat 6},C_6^0,C_6^{\flat 3},C_6^3,C_6^6\right\}
\\
\hline C_{\flat 3}^6,C_6^{\flat 3}
&
K_{H_{Fux}} \setminus \left\{C_0^0,C_0^3,C_0^5,C_0^{\flat 6},C_3^0,C_3^3,C_3^5,C_5^0,C_5^3,C_5^5,C_{\flat 6}^0,C_{\flat 6}^{\flat 6}\right\}
\\
\hline C_3^5
&
\left\{C_0^5,C_{\flat 3}^5,C_{\flat 3}^{\flat 6},C_{\flat 3}^6,C_3^5,C_5^3,C_5^5,C_6^{\flat 3}\right\}
\\
\hline C_3^6
&
\left\{C_0^{\flat 3},C_0^6,C_{\flat 3}^{\flat 3},C_{\flat 3}^6,C_3^6,C_5^{\flat 3},C_{\flat 6}^{\flat 3},C_6^{\flat 3},C_6^6\right\}
\\
\hline C_5^0
&
\left\{C_5^0,C_5^{\flat 3},C_5^3,C_5^5\right\}
\\
\hline C_5^{\flat 3}
&
K_{H_{Fux}} \setminus \left\{C_0^0,C_0^3,C_0^5,C_{\flat 3}^5,C_3^3,C_3^5,C_{\flat 6}^0,C_{\flat 6}^{\flat 6},C_6^0,C_6^3\right\}
\\
\hline C_5^3
&
\left\{C_{\flat 3}^{\flat 6},C_{\flat 3}^6,C_3^5,C_5^0,C_5^{\flat 3},C_5^3,C_5^5,C_6^{\flat 3}\right\}
\\
\hline C_5^5
&
\left\{C_0^5,C_0^6,C_{\flat 3}^0,C_{\flat 3}^5,C_{\flat 3}^{\flat 6},C_3^3,C_3^5,C_3^6,C_5^0,C_5^{\flat 3},C_5^3,C_5^5\right\}
\\
\hline C_{\flat 6}^{\flat 3}
&
K_{H_{Fux}} \setminus \left\{C_0^0,C_0^3,C_0^{\flat 6},C_{\flat 3}^0,C_{\flat 3}^5,C_3^0,C_3^3,C_3^5,C_5^3,C_5^5\right\}
\\
\hline C_{\flat 6}^{\flat 6}
&
K_{H_{Fux}} \setminus \left\{C_0^0,C_0^{\flat 3},C_0^3,C_0^5,C_{\flat 3}^0,C_3^0,C_3^3,C_3^5,C_{\flat 6}^0,C_{\flat 6}^{\flat 3},C_6^0,C_6^{\flat 3}\right\}
\\
\hline C_6^3
&
\left\{C_0^5,C_{\flat 3}^0,C_{\flat 3}^{\flat 3},C_{\flat 3}^5,C_{\flat 3}^{\flat 6},C_{\flat 3}^6,C_3^6,C_5^0,C_{\flat 6}^{\flat 3},C_6^0,C_6^{\flat 3},C_6^3,C_6^6\right\}
\end{array}
\]

\[
\def\arraystretch{1.15}\begin{array}{c|l}
K_{H_{Fux}} &  j=4,10\\
\hline \hline 
C_0^0,C_0^3,C_3^3,C_{\flat 6}^0
&
K_{H_{Fux}} \setminus \left\{C_{\flat 3}^{\flat 3},C_{\flat 3}^5,C_5^{\flat 3},C_5^5,C_6^6\right\}
\\
\hline C_0^{\flat 3},C_0^6
&
\left\{C_0^6,C_{\flat 3}^0,C_{\flat 3}^{\flat 6},C_{\flat 3}^6,C_3^6,C_5^0,C_6^0,C_6^{\flat 3},C_6^3,C_6^6\right\}
\\
\hline C_0^5
&
\left\{C_0^5,C_{\flat 3}^5,C_{\flat 3}^{\flat 6},C_{\flat 3}^6,C_3^5,C_3^6,C_5^0,C_5^5,C_{\flat 6}^{\flat 3},C_6^0,C_6^{\flat 3},C_6^3,C_6^6\right\}
\\
\hline C_0^{\flat 6},C_3^0
&
K_{H_{Fux}} \setminus \left\{C_{\flat 3}^{\flat 3},C_{\flat 3}^6,C_6^{\flat 3},C_6^6\right\}
\\
\hline C_{\flat 3}^0,C_6^0
&
\left\{C_0^6,C_{\flat 3}^{\flat 6},C_{\flat 3}^6,C_3^6,C_5^0,C_5^3,C_6^{\flat 3},C_6^6\right\}
\\
\hline C_{\flat 3}^{\flat 3},C_6^6
&
\left\{C_0^{\flat 6},C_0^6,C_{\flat 3}^0,C_{\flat 3}^{\flat 6},C_{\flat 3}^6,C_3^6,C_5^0,C_5^3,C_{\flat 6}^{\flat 6},C_6^0,C_6^{\flat 3},C_6^3,C_6^6\right\}
\\
\hline C_{\flat 3}^5
&
\left\{C_0^5,C_{\flat 3}^{\flat 3},C_{\flat 3}^5,C_{\flat 3}^{\flat 6},C_3^5,C_3^6,C_5^0,C_5^{\flat 3},C_5^5,C_6^6\right\}
\\
\hline C_{\flat 3}^{\flat 6}
&
\left\{C_0^{\flat 6},C_0^6,C_{\flat 3}^{\flat 6},C_{\flat 3}^6,C_3^6,C_{\flat 6}^{\flat 6},C_6^6\right\}
\\
\hline C_{\flat 3}^6,C_6^{\flat 3}
&
\varnothing
\\
\hline C_3^5
&
K_{H_{Fux}} \setminus \left\{C_{\flat 3}^{\flat 3},C_{\flat 3}^{\flat 6},C_5^{\flat 3},C_5^3,C_6^3,C_6^6\right\}
\\
\hline C_3^6
&
\left\{C_0^0,C_0^{\flat 3},C_0^3,C_0^5,C_0^{\flat 6},C_0^6,C_3^0,C_3^3,C_3^5,C_3^6,C_{\flat 6}^0,C_{\flat 6}^{\flat 3},C_{\flat 6}^{\flat 6}\right\}
\\
\hline C_5^0
&
K_{H_{Fux}} \setminus \left\{C_0^{\flat 3},C_0^5,C_0^6,C_{\flat 3}^{\flat 3},C_{\flat 3}^5,C_{\flat 3}^6,C_3^5,C_3^6,C_{\flat 6}^{\flat 3},C_6^{\flat 3},C_6^6\right\}
\\
\hline C_5^{\flat 3}
&
\left\{C_0^{\flat 6},C_0^6,C_{\flat 3}^0,C_{\flat 3}^{\flat 6},C_3^0,C_3^6,C_5^0,C_5^{\flat 3},C_5^3,C_5^5\right\}
\\
\hline C_5^3
&
K_{H_{Fux}} \setminus \left\{C_0^5,C_{\flat 3}^{\flat 3},C_{\flat 3}^5,C_3^5,C_{\flat 6}^{\flat 3},C_6^6\right\}
\\
\hline C_5^5
&
K_{H_{Fux}} \setminus \left\{C_0^{\flat 3},C_0^3,C_0^{\flat 6},C_3^0,C_3^3,C_{\flat 6}^0,C_{\flat 6}^{\flat 3},C_{\flat 6}^{\flat 6},C_6^0,C_6^3\right\}
\\
\hline C_{\flat 6}^{\flat 3}
&
\left\{C_0^5,C_{\flat 3}^{\flat 6},C_{\flat 3}^6,C_3^6,C_5^0,C_{\flat 6}^0,C_{\flat 6}^{\flat 3},C_{\flat 6}^{\flat 6},C_6^0,C_6^{\flat 3},C_6^3,C_6^6\right\}
\\
\hline C_{\flat 6}^{\flat 6}
&
K_{H_{Fux}} \setminus \left\{C_{\flat 3}^0,C_{\flat 3}^{\flat 3},C_{\flat 3}^5,C_5^{\flat 3},C_5^3,C_5^5,C_6^0,C_6^3\right\}
\\
\hline C_6^3
&
K_{H_{Fux}} \setminus \left\{C_0^{\flat 3},C_0^6,C_{\flat 3}^{\flat 3},C_{\flat 3}^5,C_3^5,C_5^{\flat 3},C_5^5,C_6^6\right\}
\end{array}
\]

\[
\def\arraystretch{1.15}\begin{array}{c|l}
K_{H_{Fux}} &  j=5,11\\
\hline \hline 
C_0^0,C_0^3,C_3^3,C_{\flat 6}^0
&
\left\{C_{\flat 3}^{\flat 3},C_{\flat 3}^5,C_{\flat 3}^{\flat 6},C_3^6,C_5^0,C_5^{\flat 3},C_5^5,C_6^6\right\}
\\
\hline C_0^{\flat 3},C_0^6
&
\left\{C_0^5,C_{\flat 3}^5,C_{\flat 3}^{\flat 6},C_{\flat 3}^6,C_3^5,C_5^3,C_5^5,C_6^0,C_6^{\flat 3},C_6^3,C_6^6\right\}
\\
\hline C_0^5
&
\left\{C_0^5,C_{\flat 3}^5,C_{\flat 3}^{\flat 6},C_{\flat 3}^6,C_3^5,C_3^6,C_5^0,C_5^5,C_{\flat 6}^{\flat 3},C_6^0,C_6^{\flat 3},C_6^3,C_6^6\right\}
\\
\hline C_0^{\flat 6},C_3^0
&
\left\{C_0^{\flat 6},C_0^6,C_{\flat 3}^0,C_{\flat 3}^{\flat 6},C_3^0,C_3^6,C_5^0,C_5^{\flat 3},C_5^3\right\}
\\
\hline C_{\flat 3}^0,C_6^0
&
\left\{C_0^6,C_{\flat 3}^{\flat 6},C_{\flat 3}^6,C_3^5,C_3^6,C_5^3,C_6^{\flat 3},C_6^6\right\}
\\
\hline C_{\flat 3}^{\flat 3},C_6^6
&
\left\{C_0^3,C_0^5,C_0^6,C_{\flat 3}^0,C_{\flat 3}^5,C_{\flat 3}^{\flat 6},C_3^3,C_3^5,C_3^6,C_5^0,C_5^{\flat 3},C_5^3,C_5^5\right\}
\\
\hline C_{\flat 3}^5
&
K_{H_{Fux}} \setminus \left\{C_0^{\flat 3},C_0^6,C_{\flat 3}^0,C_{\flat 3}^{\flat 3},C_5^{\flat 3},C_5^3,C_{\flat 6}^{\flat 3},C_6^0,C_6^3,C_6^6\right\}
\\
\hline C_{\flat 3}^{\flat 6}
&
\left\{C_0^{\flat 6},C_0^6,C_{\flat 3}^{\flat 6},C_{\flat 3}^6,C_3^6,C_{\flat 6}^{\flat 6},C_6^6\right\}
\\
\hline C_{\flat 3}^6,C_3^6,C_6^{\flat 3}
&
\varnothing
\\
\hline C_3^5
&
\left\{C_0^5,C_{\flat 3}^5,C_{\flat 3}^{\flat 6},C_{\flat 3}^6,C_3^5,C_3^6,C_5^0,C_5^5,C_6^{\flat 3}\right\}
\\
\hline C_5^0
&
\left\{C_5^0,C_5^{\flat 3},C_5^3,C_5^5\right\}
\\
\hline C_5^{\flat 3}
&
\left\{C_0^{\flat 6},C_0^6,C_{\flat 3}^0,C_{\flat 3}^{\flat 6},C_3^0,C_3^6,C_5^0,C_5^{\flat 3},C_5^3,C_5^5\right\}
\\
\hline C_5^3
&
\left\{C_{\flat 3}^{\flat 6},C_{\flat 3}^6,C_3^6,C_5^0,C_5^{\flat 3},C_5^3,C_5^5,C_6^{\flat 3}\right\}
\\
\hline C_5^5
&
\left\{C_0^5,C_0^6,C_{\flat 3}^0,C_{\flat 3}^5,C_{\flat 3}^{\flat 6},C_3^5,C_3^6,C_5^0,C_5^{\flat 3},C_5^3,C_5^5,C_{\flat 6}^{\flat 6}\right\}
\\
\hline C_{\flat 6}^{\flat 3}
&
\left\{C_0^5,C_{\flat 3}^{\flat 6},C_{\flat 3}^6,C_3^6,C_5^0,C_{\flat 6}^0,C_{\flat 6}^{\flat 3},C_{\flat 6}^{\flat 6},C_6^0,C_6^{\flat 3},C_6^3,C_6^6\right\}
\\
\hline C_{\flat 6}^{\flat 6}
&
K_{H_{Fux}} \setminus \left\{C_0^0,C_0^{\flat 3},C_0^3,C_0^5,C_{\flat 3}^0,C_3^0,C_3^3,C_3^5,C_{\flat 6}^0,C_{\flat 6}^{\flat 3},C_6^0,C_6^{\flat 3}\right\}
\\
\hline C_6^3
&
\left\{C_0^5,C_{\flat 3}^{\flat 6},C_{\flat 3}^6,C_3^6,C_5^0,C_{\flat 6}^{\flat 3},C_6^{\flat 3},C_6^3\right\}
\end{array}
\]

The computed values of the admitted-successor set are as follows.
\[
\begin{array}{c|c}
K_{H_{Fux}} & \text{value of the admitted-successor set} \\
\hline \hline
C_{\flat 6}^{\flat 6} & 120
\\
\hline C_0^{\flat 6},\ C_3^0 & 126
\\
\hline C_{\flat 3}^{\flat 3},\ C_6^6 & 132
\\
\hline C_0^5,\ C_{\flat 3}^5 & 136
\\
\hline C_0^0,\ C_0^3,\ C_3^3,\ C_3^5,\ C_{\flat 6}^0 & 138
\\
\hline C_6^3 & 140
\\
\hline C_5^3 & 146
\\
\hline C_5^5,\ C_{\flat 6}^{\flat 3} & 152
\\
\hline C_0^{\flat 3},\ C_0^6 & 156
\\
\hline C_5^{\flat 3} & 168
\\
\hline C_{\flat 3}^0,\ C_6^0 & 184
\\
\hline C_5^0 & 198
\\
\hline C_{\flat 3}^{\flat 6} & 204
\\
\hline C_3^6 & 206
\\
\hline C_{\flat 3}^6,\ C_6^{\flat 3} & 256
\end{array}
\]

\end{document}